\newtheorem{theorem}{Theorem}[section]
\newtheorem{lemma}[theorem]{Lemma}
\theoremstyle{definition}
\newtheorem{definition}[theorem]{Definition}
\newtheoremstyle{named}{}{}{\itshape}{}{\bfseries}{.}{.5em}{\thmnote{#3 }#1}
\theoremstyle{named}
\definecolor{DOCTORi}{cmyk}{ 0.5, 0.9, 0, 0}
\begin{document}
	
\title[regular graph related to  the conjugacy class sizes]{On the regularity of a  graph related to conjugacy class sizes of a normal subgroup}
\author[Shabnam Rahimi]{ Shabnam Rahimi}
\address{Shabnam Rahimi, Faculty of Math. and Computer Sci., \newline Amirkabir University of Technology (Tehran Polytechnic), 15914 Tehran, Iran.}
\email{rahimi569@aut.ac.ir}

\maketitle

\begin{abstract}
Given a finite group $G$ with a normal subgroup $N$, the simple graph $\Gamma_\textit{G}( \textit{N} )$ is a graph whose vertices are of the form $|x^G|$, where $x\in{N\setminus{Z(G)}}$, and $x^G$ is the $G$-conjugacy class of $N$ containing the element $x$. Two vertices $|x^G|$ and $|y^G|$ are adjacent if they are not co-prime. In this article we prove that, if $\Gamma_G(N)$ is a connected incomplete regular graph, then  $N= P \times{A}$ where $P$ is a $p$-group, for some prime $p$ and $A\leq{Z(G)}$,   and  ${\bf Z}(N)\not = N\cap {\bf Z}(G)$. 
\end{abstract}
\color{black}

\section{Introduction}

Given a finite group $G$, by cs$(G)$ we mean the set of conjugacy class sizes of the group $G$. It is well known that strong results can be obtained from cs$(G)$ about the structure of $G$ (see \cite{camina2011influence} for example).

 Some certain graphs are introduced in order to study specific properties of a given finite group $G$. We are going to discuss the graphs which are constructed upon the set cs$(G)$.   
  The common divisor graph on conjugacy class sizes, that we denote  by $\Gamma(G)$ (see \cite{bertram1990graph}), is  a graph whose vertex set is cs$(G)\setminus\{1\}$ and vertices $v$ and $w$ are adjacent if gcd$(v,w)>1$. 
  In \cite{bianchi2012regularity} the graph $\Gamma(X)$ is used with somewhat similar definition, only it is defined on an arbitrary set of integers $X$.
  
  The properties of 
  this graph, regarding 
  its association to the algebraic structure of the group $G$, has been vastly investigated in the last few decades. We refer to \cite{lewis2008overview} for a survey on this topic.

  Let $N$ be a normal subgroup of the finite group $G$, the set cs$_G(N)$ denotes $G$-conjugacy class sizes of $N$. Discussing the structure of $N$ based  on cs$_G(N)$, could potentially extend the results that are made based on  cs$(G)$, and thus these properties have been studied actively in the recent years as well. It seems to be natural to define analogous graphs based on cs$_G(N)$. Denote by $\Gamma(${cs}$_G(N))$ or $\Gamma_G(N)$, the  graph  whose vertex set contains elements of the form $|x^G|$, where $x\in{N\setminus{Z(G)}}$ and vertices $v$ and $w$ are adjacent in $\Gamma_G(N)$ if and only if they are adjacent in $\Gamma(G)$ (see \cite{beltran2015graphs}).

In \cite{bianchi2012regularity} it is proved that  \textit{if $\Gamma(G)$ is $k$-regular then it must be a complete graph of order $k+1$} for $k=2,3$; and the result is extended for any $k\in \mathbb{N}$ in \cite{bianchi2015conjugacy}.

It seems reasonable to ask whether the same results hold when   $\Gamma_G(N)$ is a regular graph.  Note that, if $\Gamma_G(N)$ is a regular disconnected graph, then by \cite[Theorem B]{beltran2015graphs},  $\Gamma_G(N)$ has two complete components and the structure of $N$ in that case is determined. So the connected case is left to discuss.
In this paper, we aim to prove the following theorem as the main result: 

\begin{theorem}[Main]
 Let $G$ be a finite group and $N$ be a normal subgroup of $G$, such that $\Gamma_G(N)$ is a connected incomplete regular graph. Then  $N/(N\cap Z(G))$ is a $p$-group, for some prime $p$,   and  ${\bf Z}(N)\not = N\cap {\bf Z}(G)$.

\end{theorem}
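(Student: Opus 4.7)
The plan is to combine three techniques: (i) extracting a coprime pair of vertices from the incompleteness hypothesis; (ii) using regularity together with standard centralizer manipulations to propagate this structure across the graph; and (iii) invoking an It\^o--Camina-type result to derive the $p$-group quotient structure. First, incompleteness supplies $x,y \in N \setminus Z(G)$ with $\gcd(|x^G|,|y^G|)=1$. The standard consequence $G = C_G(x)C_G(y)$ yields a $G$-conjugate $y'$ of $y$ that commutes with $x$; since $N \trianglelefteq G$ the product $xy'$ lies in $N$, and coprimality together with commutation forces $|(xy')^G| = |x^G|\cdot|y^G|$. This produces a vertex adjacent to both $|x^G|$ and $|y^G|$, and iterating the construction along any path of $\Gamma_G(N)$ supplies a rich collection of vertices realising prescribed products of primes.

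For the second step I would analyse the prime-divisor structure of $\mathrm{cs}_G(N)$. For each prime $q$ appearing, let $V_q$ be the set of vertices divisible by $q$. Regularity gives the identity $|\bigcup_{q \mid v} V_q| = k+1$ for every vertex $v$; combined with connectivity and the bridging vertices constructed above, this should pin down a distinguished prime $p$ with the property that every non-central $G$-class in $N$ has size divisible by $p$. A structural theorem of Beltr\'an--Felipe type, in the spirit of \cite{beltran2015graphs}, then forces every $p'$-element of $N$ to be $G$-central, which is exactly the conclusion that $N/(N\cap Z(G))$ is a $p$-group.

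I expect the combinatorial step above to be the main obstacle: translating the uniform degree count and the inclusion--exclusion identities into the clean algebraic statement that a single prime governs the non-central classes. Vertex values may carry several primes simultaneously, so the sets $V_q$ overlap intricately and crude degree counts tend to lose information; a careful extremal choice (for example, a vertex minimising the number of distinct prime divisors of its value) coupled with induction on the graph-distance between coprime vertices appears unavoidable.

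Finally, for the inequality $Z(N) \neq N \cap Z(G)$ I would argue by contradiction. If equality held, then together with the $p$-group conclusion already obtained, $N/Z(N)$ would be a $p$-group, hence $N$ is nilpotent with $N = P \times H$ where $P$ is a Sylow $p$-subgroup and $H$ is an abelian Hall $p'$-subgroup contained in $Z(N) = N \cap Z(G)$. Every vertex would then have the form $|u^G|$ for $u \in P \setminus Z(P)$, and an analysis of centralizers inside this $p$-structure should force every such $|u^G|$ to share the prime $p$, making $\Gamma_G(N)$ complete and contradicting the hypothesis.
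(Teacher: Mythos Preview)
Your plan has a fatal self-contradiction in step~(ii). You propose to locate a prime $p$ dividing \emph{every} vertex of $\Gamma_G(N)$ and then invoke a structural theorem to conclude that $p'$-elements of $N$ are $G$-central. But if a single prime divided every vertex, any two vertices would have a common factor and the graph would be complete, contrary to the standing hypothesis. So the intermediate target you set yourself is provably false in the situation at hand, and no amount of inclusion--exclusion on the sets $V_q$ can reach it. (Indeed, your own final paragraph shows that once $N/(N\cap Z(G))$ is a $p$-group \emph{and} $Z(N)=N\cap Z(G)$, every vertex is divisible by $p$; the point of the theorem is precisely that in the incomplete case $Z(N)\neq N\cap Z(G)$, so the class sizes need \emph{not} share a prime.) The Beltr\'an--Felipe-type implication you cite would therefore be applied to a vacuous hypothesis, and in any case the implication ``$p\mid|x^G|$ for all non-central $x\in N$ $\Rightarrow$ every $p'$-element of $N$ lies in $Z(G)$'' is not a standard result in the literature you reference.

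A secondary issue: in step~(i) the identity $|(xy')^G|=|x^G|\,|y'^G|$ requires $C_G(xy')=C_G(x)\cap C_G(y')$, which in turn needs $x$ and $y'$ to have coprime \emph{orders} (so that each is a power of $xy'$), not merely coprime class sizes. This is repairable by first passing to prime-power constituents, but it should be stated.

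The paper takes a different route that avoids the trap above: it never looks for a common prime among the class sizes. Instead it proves (its Theorem~\ref{2primes}) that if $N$ contains commuting non-central $p_1$- and $p_2$-elements for distinct primes $p_1,p_2$, then regularity and connectedness force $\Gamma_G(N)$ to be complete. This is obtained via a partner/neighbourhood argument (Lemma~\ref{elementpower} and Lemma~\ref{key}) comparing degrees at the product vertex $|(x_0y_0)^G|$ with degrees at a coprime pair in its neighbourhood. One then argues directly on $\pi\bigl(N/(N\cap Z(G))\bigr)$: if this set has at least three primes, any two $p_i$-class sizes already share a prime and the graph is complete; if it has exactly two primes, either $Z(N)\not\subseteq Z(G)$ (supplying the commuting pair needed for Theorem~\ref{2primes}) or a quasi-Frobenius analysis via Lemma~\ref{pfff} again forces completeness. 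Hence exactly one prime survives. Your final paragraph, deducing $Z(N)\neq N\cap Z(G)$ from completeness in the contrary case, is correct and coincides with the paper's argument.
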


\section{Preliminary}

\begin{definition}
For a given vertex $v$ of the graph $\Gamma$, define the neighborhood of $v$, the set of vertices adjacent to $v$, including $v$ itself and denote it by $\mathcal{N}_{\Gamma}(v)$.    
\end{definition}

\begin{definition} 
 Two distinct vertices $v_1, v_2$  of the graph $\Gamma$  are said to be  \textit{partners}, if:
  
\begin{center}
   $ \mathcal{N}_{\Gamma}(v_1)=\mathcal{N}_{\Gamma}(v_2)$
\end{center}
Observe that partnership provides an equivalence relation on the set of vertices of the graph.
\end{definition}

\begin{lemma} \label{elementpower}Let $|x^G|$  be a vertex of  $\Gamma_G(N)$, for some $x\in N$ and $\Gamma_G(N)$ be regular. If $y=x^a$ is non-central for some integer $a$, then either $|y^G|=|x^G|$ or $|x^G|$ and $|y^G|$ are partners.
\end{lemma}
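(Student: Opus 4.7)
The plan is to use the elementary fact that $C_G(x)\subseteq C_G(x^a)=C_G(y)$, which immediately gives that $|y^G|$ divides $|x^G|$. From this divisibility, every prime dividing $|y^G|$ also divides $|x^G|$, so common-divisor adjacency with $|y^G|$ will always imply common-divisor adjacency with $|x^G|$.

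More precisely, I would split into two cases. If $|y^G|=|x^G|$, there is nothing to prove. Otherwise, I need to establish $\mathcal{N}_{\Gamma_G(N)}(|y^G|)=\mathcal{N}_{\Gamma_G(N)}(|x^G|)$. First, note that since $y$ is non-central, $|y^G|>1$ is a divisor of $|x^G|$ bigger than $1$, so $\gcd(|x^G|,|y^G|)=|y^G|>1$; hence $|x^G|$ and $|y^G|$ are adjacent, and each lies in the other's neighborhood. Next, take any other vertex $w\in\mathcal{N}_{\Gamma_G(N)}(|y^G|)$; then some prime $p$ divides both $w$ and $|y^G|$, and since $|y^G|\mid |x^G|$, the same $p$ divides $|x^G|$, making $w$ adjacent to $|x^G|$ as well. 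This shows $\mathcal{N}_{\Gamma_G(N)}(|y^G|)\subseteq \mathcal{N}_{\Gamma_G(N)}(|x^G|)$.

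Finally, regularity is used in a single line: both neighborhoods (including the vertex itself) have cardinality $k+1$, where $k$ is the common degree, so the containment just proven forces equality. Therefore $|x^G|$ and $|y^G|$ are distinct vertices with identical neighborhoods, i.e.\ partners, by the definition recalled above.

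There is no serious obstacle here; the only subtlety is being careful about conventions (neighborhoods are defined to include the vertex itself, and $y$ being non-central is what guarantees $|y^G|$ is actually a vertex of the graph and gives $|y^G|>1$). Regularity enters only at the very end to upgrade an inclusion of neighborhoods into an equality.
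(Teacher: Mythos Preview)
Your proof is correct and follows essentially the same approach as the paper: use $C_G(x)\subseteq C_G(y)$ to get $|y^G|\mid |x^G|$, deduce the inclusion $\mathcal{N}_{\Gamma_G(N)}(|y^G|)\subseteq \mathcal{N}_{\Gamma_G(N)}(|x^G|)$, and then invoke regularity to force equality. The paper's version is simply terser, compressing your neighborhood-inclusion and cardinality steps into a single sentence.
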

\begin{proof}
 Assume $|x^G|\not = |y^G|$, then   $C_G(x)\subset C_G(y)$. Therefore $ |x^G|$ is divisible by $|y^G|$.  As  $|y^G|$ and $|x^G|$  have same degrees in $\Gamma_G(N)$, we conclude that they are partners. 
\end{proof}

\begin{lemma} \label{pfff} (see \cite{bertram1990graph,kazarin1981groups})
A finite group G satisfies $n(\Gamma(G))=2$ if and only if G is quasi-Frobenius and $G/{\bf Z}(G)$  has abelian kernel and complement.
\end{lemma}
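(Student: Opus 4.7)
The plan is to prove both directions of the equivalence. For the reverse direction, I would argue by direct computation. Assume $G/Z(G)$ is Frobenius with abelian kernel $K/Z(G)$ of order $k$ and abelian complement $H/Z(G)$ of order $h$, so that $\gcd(h,k) = 1$. The Frobenius property says the centralizer of any non-identity element of $K/Z(G)$ lies in $K/Z(G)$; pulling back and using that $K/Z(G)$ is abelian, one obtains $C_G(x) = K$ for every non-central $x \in K$, hence $|x^G| = h$. Symmetrically, $|y^G| = k$ whenever $y$ is $G$-conjugate into $H \setminus Z(G)$, and every non-central element falls into one of these two cases. Since $\gcd(h,k) = 1$, the graph $\Gamma(G)$ has exactly two non-adjacent vertices $h$ and $k$, so $n(\Gamma(G)) = 2$.

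For the forward direction, suppose $n(\Gamma(G)) = 2$, and let $V_1, V_2$ be the two components, with associated prime sets $\pi_1, \pi_2$ (the primes dividing members of $V_i$). First I would check that $\pi_1 \cap \pi_2 = \emptyset$: a common prime $p$ would force any vertex of $V_1$ divisible by $p$ to be adjacent in $\Gamma(G)$ to any vertex of $V_2$ divisible by $p$, collapsing the components. Consequently, every non-central conjugacy class size is a $\pi_i$-number for exactly one $i$. Set
\[
L_i = Z(G) \cup \{x \in G \setminus Z(G) : |x^G| \text{ is a } \pi_i\text{-number}\}, \quad i = 1, 2.
\]
The goal is then the structural chain: (a) each $L_i$ is a normal subgroup of $G$; (b) $L_1 \cap L_2 = Z(G)$ and $L_1 L_2 = G$; (c) each $L_i/Z(G)$ is abelian; (d) $L_j/Z(G)$ acts fixed-point-freely by conjugation on $L_i/Z(G)$ for $i \neq j$, so that $G/Z(G)$ is Frobenius with kernel and complement $L_1/Z(G)$ and $L_2/Z(G)$ in some order, which is exactly the quasi-Frobenius conclusion.

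The main obstacle will be step (a), the closure of $L_i$ under products. The strategy is classical: for non-central $x \in L_i$, the $\pi_i$-ness of $|x^G|$ forces $C_G(x)$ to contain a full Sylow $p$-subgroup of $G$ for every $p \in \pi_j$ with $j \neq i$; combined with Wielandt-type divisibility results for products of conjugacy classes, this yields that $|(xy)^G|$ is again a $\pi_i$-number whenever $x, y \in L_i \setminus Z(G)$ with $xy \notin Z(G)$. This is the technical content inherited from \cite{bertram1990graph, kazarin1981groups}. Normality of $L_i$ is then automatic, since $L_i$ is by construction a union of $G$-conjugacy classes. Step (c) reuses the same divisibility machinery: if two non-central elements of $L_i$ failed to commute, conjugating one by a non-centralizing element would introduce primes of $\pi_j$ into its class size, contradicting membership in $L_i$. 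Step (d) is similar: a non-trivial fixed point would merge the prime sets $\pi_1, \pi_2$. Once (a) is secured, (b)--(d) fall out essentially formally, and Frobenius structure on $G/Z(G)$ gives the quasi-Frobenius conclusion.
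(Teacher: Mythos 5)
The paper offers no proof of this lemma at all—it is quoted verbatim (with a slight misstatement, see below) from the cited sources of Bertram--Herzog--Mann and Kazarin—so there is no internal argument to compare against; your sketch has to stand on its own, and it has a genuine gap in each direction. In the reverse direction, the step ``pulling back and using that $K/Z(G)$ is abelian, one obtains $C_G(x)=K$ for every non-central $x\in K$'' does not follow: abelianness of $K/Z(G)$ only gives $[K,x]\le Z(G)$, not $[K,x]=1$, so the Frobenius property yields $C_G(x)\le K$ but nothing forces $K\le C_G(x)$. Indeed the implication fails for the hypothesis as literally stated: let $K$ be extraspecial of order $27$ and exponent $3$ and let an involution $\sigma$ invert $K/Z(K)$ (hence centralize $Z(K)$); then $G=K\rtimes\langle\sigma\rangle$ has $Z(G)=Z(K)$ and $G/Z(G)\cong (C_3\times C_3)\rtimes C_2$ Frobenius with abelian kernel and complement, yet the nontrivial class sizes of $G$ are $6$ and $9$, so $\Gamma(G)$ is connected and $n(\Gamma(G))=1$. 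The correct hypothesis (the one actually in the cited papers) is that the \emph{preimages} $K$ and $H$ in $G$ are abelian; with that, $K\le C_G(x)\le K$ is immediate and your computation of the two class sizes $h$ and $k$ goes through.

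In the forward direction the outline is internally inconsistent: you cannot have both $L_1$ and $L_2$ normal with $L_1L_2=G$ and $L_1\cap L_2=Z(G)$ and then exhibit them as Frobenius kernel and complement of $G/Z(G)$, since $[L_1,L_2]\le L_1\cap L_2=Z(G)$ would make $G/Z(G)$ a direct product, and a Frobenius complement is never normal. What is actually true (and what the quasi-Frobenius conclusion predicts) is that exactly one of your two sets is a normal subgroup—the preimage of the kernel—while the other is a union of conjugates of the complement and is not closed under multiplication; so step (a) must be asserted for only one $L_i$, and identifying the complement requires a separate argument (a Frattini-type or counting argument showing the remaining elements fall into conjugates of a single subgroup acting fixed-point-freely). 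Beyond this structural error, the genuinely difficult steps—closure under products via the Wielandt-type divisibility, abelianness of the kernel, and fixed-point-freeness—are explicitly deferred to the cited literature rather than proved, so even after the corrections the proposal remains an outline of the known proof rather than a proof.
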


\section{Main results}

\begin{lemma}\label{key}
		Let G be a finite group and $N$ be a normal subgroup of $G$ such that $|N/(N\cap Z(G))|$ is divisible by two distinct prime divisors $p_1$ and $p_2$.  Let  $x_0,y_0\in{N}$ be non-central $p_1$ and $p_2$-elements(respectively) such that $x_0y_0=y_0x_0$. Also, assume that $\Gamma_G(N)$ is a connected incomplete regular graph.  Then denoting by $v_0, w_0$ and $z_0$ the sizes of conjugacy classes of $G$, associated with $x_0, y_0$ and $x_0y_0$, respectively, the followings hold: 
	
		\begin{itemize}
		    \item[(a)]
			There exists a non-central $p_1$-element $x_1\in N$ and a non-central $p_2$-element  $y_1\in N$, such that  $v_1=|x_1^G|, w_1=|y_1^G|\in \mathcal{N}_{\Gamma_G(N)}(z_0)$, where  $v_1$ and $w_1$  are not adjacent in $\Gamma_G(N)$, $(v_1, p_1p_2)=p_2$ and $(w_1, p_1p_2)=p_1$.    
			
			\item[(b)]
			$v_0$ is divisible by $p_2$, $w_0$ is divisible by $p_1$. In particular $z_0$ is divisible by $p_1p_2$.
		\end{itemize}

\end{lemma}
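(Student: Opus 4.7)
My plan is to apply Lemma \ref{elementpower} to collapse the commuting triple $x_0,y_0,x_0y_0$ into a single partnership class, then exploit connectedness plus regularity to force non-adjacent vertices into the closed neighborhood of $z_0$, and finally upgrade such a pair via $p$-parts to obtain the specific $p_1$- and $p_2$-element class sizes demanded in~(a). Part~(b) should follow by analysing how $v_1,w_1$ must sit relative to $v_0,w_0$ under partnership.

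Since $x_0,y_0$ commute and have coprime orders (a power of $p_1$ and a power of $p_2$), the Chinese Remainder Theorem realises each of $x_0,y_0$ as a non-central power of $x_0y_0$, so Lemma \ref{elementpower} forces $v_0,w_0,z_0$ to be pairwise equal or partners, giving
\[
\mathcal{N}_{\Gamma_G(N)}(v_0)=\mathcal{N}_{\Gamma_G(N)}(w_0)=\mathcal{N}_{\Gamma_G(N)}(z_0).
\]
Coprime commuting orders moreover yield $C_G(x_0y_0)=C_G(x_0)\cap C_G(y_0)$, whence $v_0\mid z_0$ and $w_0\mid z_0$.

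For part~(a) I would first establish that $\mathcal{N}(z_0)$ does not induce a clique. If it did, by incompleteness pick $u\notin\mathcal{N}(z_0)$, by connectedness take a shortest path from $z_0$ to $u$, and let $a$ be its last vertex inside $\mathcal{N}(z_0)$: then $a$ would be adjacent to every other vertex of $\mathcal{N}(z_0)$ (by the clique assumption) and also to the next vertex of the path outside $\mathcal{N}(z_0)$, forcing $\deg(a)>\deg(z_0)$ and contradicting regularity. So there exist non-adjacent $\alpha,\beta\in\mathcal{N}(z_0)$. The main technical step is then to replace $\alpha,\beta$ by class sizes $v_1,w_1$ of non-central $p_1$- and $p_2$-elements respectively, with $(v_1,p_1p_2)=p_2$ and $(w_1,p_1p_2)=p_1$. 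Writing representatives of $\alpha,\beta$ as products of their commuting prime-power parts, each such part is a power of the whole element, so Lemma \ref{elementpower} keeps the class size of any non-central part either equal to or a partner of the original, hence still inside $\mathcal{N}(z_0)$. The non-adjacency $\gcd(\alpha,\beta)=1$ and the adjacency of $\alpha,\beta$ to $z_0$ then constrain which primes can appear in each $p$-part class size; the careful bookkeeping required to pick a $p_1$-part and a $p_2$-part whose class sizes simultaneously satisfy the $(\cdot,p_1p_2)$-conditions and remain non-adjacent is the main difficulty I foresee.

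For part~(b), partnership narrows the options. Since $v_1\in\mathcal{N}(v_0)$, either $v_1=v_0$, in which case $(v_1,p_1p_2)=p_2$ gives $p_2\mid v_0$ directly, or $v_1$ is adjacent to $v_0$; the common prime cannot be $p_1$ because $p_1\nmid v_1$, so it is $p_2$ or lies outside $\{p_1,p_2\}$. Ruling out the latter alternative, which I expect to be done by combining $w_1\in\mathcal{N}(v_0)$ with $\gcd(v_1,w_1)=1$ and the prescribed $\{p_1,p_2\}$-content of $v_1,w_1$, forces $p_2\mid v_0$. A symmetric argument gives $p_1\mid w_0$, and since $v_0,w_0\mid z_0$ one concludes $p_1p_2\mid z_0$.
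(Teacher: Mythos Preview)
Your scaffolding matches the paper's: the partnership of $v_0,w_0,z_0$ via Lemma~\ref{elementpower}, the existence of a non-adjacent pair in $\mathcal{N}(z_0)$ (for which you even supply the graph-theoretic argument the paper merely asserts), and the reduction of representatives to prime-power order. But the step you label ``careful bookkeeping'' is not bookkeeping at all---it is the heart of the proof, and your outline does not contain the mechanism that makes it work. The paper's device, used in both~(a) and~(b), is: if $p_i\nmid|x^G|$ then $C_G(x)$ contains a Sylow $p_i$-subgroup of $G$, so after a conjugation $x$ commutes with the given $p_i$-element ($x_0$ if $i=1$, $y_0$ if $i=2$); when $x$ is moreover a $q$-element with $q\neq p_i$, Lemma~\ref{elementpower} applied to the product $x\cdot x_0$ (resp.\ $x\cdot y_0$) makes $|x^G|$ a partner of $v_0$ (resp.\ $w_0$), hence of $z_0$, and this is then played off against the non-adjacency of $v_1$ and $w_1$. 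It is this argument that forces the prime-power representative of $\alpha$ (once $p_1\nmid\alpha$ is arranged) to actually be a $p_1$-element, and simultaneously forces $p_2\mid\alpha$. Without it there is no reason the representative of $\alpha$ should have any $p_1$-part at all, so your plan to ``pick a $p_1$-part and a $p_2$-part'' may simply be unavailable.

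The same missing idea undermines your approach to~(b). From $v_1,w_1\in\mathcal{N}(v_0)$ and $\gcd(v_1,w_1)=1$ alone you cannot rule out that $v_0$ meets $v_1$ only in a prime outside $\{p_1,p_2\}$; nothing in the adjacency data forces $p_2\mid v_0$. The paper instead assumes $p_2\nmid v_0$, uses the Sylow trick to make $x_0$ commute with the $p_2$-element $y_1$ produced in~(a), and obtains via Lemma~\ref{elementpower} that $v_0$ and $w_1$ are partners---contradicting $v_1\in\mathcal{N}(v_0)\setminus\mathcal{N}(w_1)$.
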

\begin{proof}
	Since $\Gamma_G(N)$ is a connected incomplete regular graph, then for each vertex $v$ in $\Gamma_G(N)$, there exist two distinct  non-adjacent vertices in $\mathcal{N}_{\Gamma_G(N)}(v)$. Therefore non-central elements $x_1$ and $y_1$ exist in $N$ such that  $v_1=|x_1^G|$, $w_1=|y_1^G|$ and $(v_1,w_1)=1$, while they are both connected to $z_0=|{x_0y_0}^G|$. By Lemma \ref{elementpower}, we may assume that $o(x_1)$ and $o(y_1)$ are both power of some prime, distinct primes indeed.

Note that $p_1$ can not divide both $|x_1^G|$ and $|y_1^G|$, assuming that $p_1\nmid|x_1^G|$, thus $C_G(x_1)$ must contain some Sylow $p_1$-subgroup of $G$. Without loss of generality we may suppose that $x_1x_0=x_0x_1$. If $o(x_1)$ is not a power of $p_1$, by Lemma \ref{elementpower}, $v_1$ and $v_0$ should be partners, but that yields a contradiction, for then $v_1$ and $w_1$ must be adjacent. Therefore $x_1$ has to be a $p_1$-element. Accordingly by using same arguments $o(y_1)$ should be a power of $p_2$.

	Now, we prove $(b)$. 
	On the contrary, assume that $p_2\nmid{v_0}$ consequently,  $C_G(x_0)$ contains a  Sylow $p_2$-subgroup of $G$. Similar to the given arguments in the previous paragraph, we come to conclusion that $v_0$ and $w_1$ should be partners, and that contradicts the assumption stating the non-adjacency of $v_1$ and $w_1$. Similarly we get that $w_0$ is divisible by $p_1$. Therefore, since we know $v_0|z_0$ and $w_0|z_0$, $z_0$ is divisible by $p_1p_2$, as desired.

	\end{proof}

\begin{theorem}\label{2primes}
	Let G be a finite group and $N$ be a normal subgroup of $G$ such that $|N/(N\cap Z(G))|$ is divisible by two distinct primes $p_1$ and $p_2$. And $x_0,y_0\in{N}$ be non-central $p_1$ and $p_2$-elements(respectively) such that $x_0y_0=y_0x_0$. Also, assume $\Gamma_G(N)$ is a connected  regular graph. Then $\Gamma_G(N)$ is complete. 
	\end{theorem}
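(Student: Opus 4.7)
The plan is to argue by contradiction. Suppose $\Gamma_G(N)$ is a connected incomplete regular graph, so Lemma \ref{key} applies to the commuting pair $(x_0, y_0)$ and furnishes class sizes $v_0, w_0, z_0$, together with non-central elements $x_1, y_1 \in N$ (a $p_1$-element and a $p_2$-element) whose class sizes $v_1 = |x_1^G|$, $w_1 = |y_1^G|$ are non-adjacent in $\Gamma_G(N)$, both lie in $\mathcal{N}_{\Gamma_G(N)}(z_0)$, and satisfy $p_1 \nmid v_1$, $p_2 \mid v_1$, $p_2 \nmid w_1$, and $p_1 \mid w_1$. The critical first observation is that since $x_0, y_0$ commute and have coprime orders, both are powers of $x_0 y_0$ (which is non-central, as otherwise its $p_1$-part $x_0$ would be central). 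By Lemma \ref{elementpower}, the vertices $v_0, w_0, z_0$ are therefore pairwise equal or partners, and share a common neighborhood $\mathcal{N}_0$ in $\Gamma_G(N)$; in particular $v_1, w_1 \in \mathcal{N}_0$.

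It would suffice to show that $v_1$ itself belongs to this partnership class, for then $w_1 \in \mathcal{N}_0 = \mathcal{N}(v_1)$ would give $v_1$ adjacent to $w_1$, contradicting Lemma \ref{key}(a). To accomplish this I would exploit $p_1 \nmid v_1$: $C_G(x_1)$ contains a full Sylow $p_1$-subgroup of $G$, so after replacing $(x_0, y_0)$ by a simultaneous $G$-conjugate (preserving $v_0, w_0, z_0$ and the commuting property) I may assume $x_0 \in C_G(x_1)$. The next step is to produce a non-central $p_2$-element $y' \in N$ commuting with $x_1$ that is $G$-conjugate either to $y_0$ or to $y_1$. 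Given such a $y'$, the product $x_1 y'$ is non-central and both $x_1$ and $y'$ are powers of it by coprime orders; Lemma \ref{elementpower} then yields that $v_1$ is equal or partner to $|y'^G|$. If $y' \in y_0^G$ then $|y'^G| = w_0 \in \mathcal{N}_0$ places $v_1$ into the partnership class of $\mathcal{N}_0$; if $y' \in y_1^G$ then $v_1$ is equal or partner to $w_1$ directly, yielding adjacency at once.

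The main obstacle is establishing the existence of this $y'$, i.e.\ showing that $C_N(x_1)$ intersects $y_0^G \cup y_1^G$ (or, by the symmetric argument swapping the roles of $p_1$ and $p_2$, that $C_N(y_1)$ intersects $x_0^G \cup x_1^G$). The delicate case is when $C_N(x_1)$ has trivial Sylow $p_2$-subgroup and $C_N(y_1)$ has trivial Sylow $p_1$-subgroup, so that neither of the ``direct'' commuting pairs can be assembled. Overcoming this will require a finer exploitation of the assumption that $|N/(N \cap Z(G))|$ is divisible by both primes together with a counting or transfer argument in the normal subgroup $N$, or the construction of an intermediate element bridging $v_1$ to $\mathcal{N}_0$ through a longer chain of Lemma \ref{elementpower} partnerships. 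This is where I expect the technical core of the argument to lie.
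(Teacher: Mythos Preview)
Your opening observation---that $x_0$, $y_0$ are powers of $x_0y_0$, so $v_0$, $w_0$, $z_0$ are pairwise partners sharing a common neighbourhood $\mathcal{N}_0$---is correct and is used (implicitly) in the paper as well. But the gap you flag is real: there is no evident reason why $C_N(x_1)$ should meet $y_0^G\cup y_1^G$, and the vague appeals to ``counting or transfer'' or ``longer partnership chains'' do not constitute an argument. As it stands the proposal is a plan with its central step missing.

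The paper does \emph{not} fill this gap; it sidesteps it. Instead of trying to force $v_1$ into the partnership class of $z_0$, the paper compares the two neighbourhoods directly via a degree count. Set $A=\mathcal{N}_{\Gamma_G(N)}(v_1)\setminus\{z_0,v_1\}$, so that $d(v_1)=|A|+1$. The claim is that every vertex of $A$ is divisible by $p_1$ or $p_2$. If some $|s^G|\in A$ were coprime to $p_1p_2$, one may take $s$ to be an $r$-element. Since $p_1\nmid |s^G|$, a conjugate of $s$ commutes with $x_0$ and (separately) a conjugate of $s$ commutes with $x_1$; if $r\neq p_1$ these pairs have coprime orders, so Lemma~\ref{elementpower} makes $|s^G|$ a partner of both $v_0$ and $v_1$, whence $v_1$ is a partner of $z_0$ and $w_1\in\mathcal{N}(v_1)$, a contradiction. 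If $r=p_1$, use $p_2\nmid |s^G|$ and the elements $y_0,y_1$ in the same way to see that $|s^G|$ is a partner of $w_1$; but $|s^G|\in A$ is adjacent to $v_1$, so again $w_1$ is adjacent to $v_1$. Thus every vertex of $A$ is divisible by $p_1$ or $p_2$, hence adjacent to $z_0$ (as $p_1p_2\mid z_0$ by Lemma~\ref{key}(b)). Now $z_0$ is adjacent to everything in $A\cup\{v_1,w_1\}$, and since $w_1\notin\mathcal{N}(v_1)$ we have $w_1\notin A$, giving $d(z_0)\ge |A|+2>d(v_1)$, which contradicts regularity.

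So the technical core is not the construction of your element $y'$ at all, but rather the combination of (i) showing that any neighbour of $v_1$ coprime to $p_1p_2$ would itself serve as the bridge you were seeking, and (ii) once no such neighbour exists, a one-line comparison of $d(v_1)$ and $d(z_0)$. Your partnership machinery is exactly what is needed for step~(i); what you are missing is the change of target from ``$v_1$ partners $z_0$'' to ``$\mathcal{N}(v_1)\cup\{w_1\}\subseteq\mathcal{N}(z_0)$''.
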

\begin{proof}
On the contrary assume that $\Gamma_G(N)$ is not complete, so Lemma \ref{key} would be applicable here.  Accordingly there exist such vertices $z_0:=|(x_0y_0)^G|$,  $v_0:=|x_0^G|$ and $w_0:=|y_0^G|$ as described in the statement of  Lemma \ref{key}, also let $v_1$ and $w_1$ be vertices such as those in the statement of Lemma \ref{key}(a); and define $A=\mathcal{N}_{\Gamma_{G}(N)}(v_1)\setminus \{z_0,v_1\}$.   

Assuming there exists an element $s\in N$ such that   $|s^G|\in A$ and  $(p_1p_2 , {|s^G|})=1$, then $s$ is an $r$-element, for some prime $r$. If 
 $r\neq{p_1}$, considering that $(p_1p_2 , {|s^G|})=1$ we may assume that $x_0s=sx_0$ then by applying Lemma \ref{elementpower}, the conclusion would be that vertices $v_0$ and ${|s^G|}$ must be partners, and so should $v_1$ and ${|s^G|}$, which implies that $v_1$ and $w_1$ are adjacent. So 
$s$  is a $p_1$-element. 

On the other hand, $C_G(s)$ contains a  Sylow $p_2$-subgroup of $G$ (it is mentioned above that $(p_1p_2,|s^G|)=1$), hence we may  assume that $s{y_1}={y_1}s$ and $sy_0=y_0s$.
Therefore, applying Lemma \ref{elementpower}, results in partnership of $|s^G|,{w_0},$ and ${w_1}$ which  implies adjacency of $v_1$ and $w_1$, a contradiction.
 Accordingly,    every vertex in   $ A$ is divisible by $p_1$ or $p_2$.

 Observe,  that $d(v_1)=|A|+1$. By regularity of $\Gamma_G(N)$ we have  $d(z_0)=|A|+1$. By Lemma \ref{key}(b), $z_0$ is adjacent to all vertices in $\{v_1,w_1\}\cup A$, which implies that $w_1\in A$, that again is a contradiction.   \\
\end{proof}

\begin{proof}[\textbf{Proof of the Main Theorem}]
Let $\pi(N/(N\cap {\bf Z}(G)))=\{p_1,\cdots, p_n\}$ where $p_i$s are distinct primes. By Theorem \ref{2primes},  for  every $p_i$-element $a_i$, $|a_i^G|$ is divisible  by  $({\prod_{j=1}^{n} p_j})/{p_i}$.  If $n>2$, we get that $\Gamma_G(N)$ is complete. Therefore we discuss the case where $n\leq 2$.

First, assume $n=2$. If ${\bf Z}(N)\nsubseteq {\bf Z}(G)$,  then  there exist $p_1$-element $x_0$ and $p_2$-element  $y_0$ in $N\setminus{Z(G)}$ such that $x_0y_0=y_0x_0$, and so by applying Theorem \ref{2primes} the graph must be complete. So let us discuss the case that ${\bf Z}(N)\subseteq {\bf Z}(G)$ and  $|{N}/{\textbf{Z}(N)}|={p_1^{n_1}}{p_2^{n_2}}$, for integers $n_1$ and $n_2$.

Note that we may assume for every non-central element $x\in N$, $|x^G|$ is not divisible by  $p_1p_2$, otherwise   $\Gamma_G( N )$ is complete. Therefore, for  every non-central $p_i$-element $x$, we have $|x^N|=p_j^{n_j}$, where $i\in \{1,2\}$ and $\{i,j\}=\{1,2\}$. We know that $\Gamma( N )$ is a disconnected graph with two vertices of sizes $p_i^{n_i}$, for $i=1,2$. Now, considering Lemma \ref{pfff}, $N$ is a quasi-Frobenius group with abelian kernel and complement. Therefore the Frobenius complement of $N/(N\cap {\bf Z}(G))$ is a cyclic $p_i$-group, for some $i=1,2$. Without loss of generality, we assume  that the Frobenius complement of $N/(N\cap {\bf Z}(G))$ is a cyclic $p_2$-group. Let $x$ be a non-central  $p_2$-element of $N$, such that $\langle x(N\cap {\bf Z}(G))\rangle$  is a Sylow $p_2$-subgroup of $N/(N\cap {\bf Z}(G))$.  Therefore, any non-central $p_2$-element of $N$ is $|x^G|$'s partner, in which case the graph must be complete. Consequently the case $n=1$ remains, as desired.\\
If ${\bf Z}(N)\subseteq {\bf Z}(G)$, the graph would be complete, since each vertex is divisible by $p$. Therefore, ${\bf Z}(N)\nsubseteq {\bf Z}(G)$ must be the case, and the proof is complete.
\end{proof}

\bibliographystyle{plain}
\bibliography{Bibliography.bib}

\end{document}